\newtheorem{prop}{Proposition}[section]
\newtheorem{lem}[prop]{Lemme}
\newtheorem{thm}[prop]{Th\'eor\`eme}
\theoremstyle{definition}
\newtheorem{ques}[prop]{Question}
\newcommand{\R}{\mathbb{R}}
\newcommand{\Q}{\mathbb{Q}}
\newcommand{\Z}{\mathbb{Z}}
\newcommand{\floor}[1]{\left\lfloor #1\right\rfloor}
\begin{document}

\title{Exponentielle tronqu\'ee et autres contes galoisiens}
\author{{P}atrick {R}abarison, {F}abien {P}azuki et {P}ascal {M}olin}
\address{Patrick Rabarison. Universit\'e d'Antananarivo, D\'epartment de Math\'ematiques et d'Informatique, BP 906 - Antananarivo 101 - Madagascar.}
\email{prabarison@gmail.com}
\address{Fabien Pazuki. University of Copenhagen, Institute of Mathematics, Universitetsparken 5, 2100 Copenhagen, Denmark, and Universit\'e de Bordeaux, IMB, 351, cours de la Lib\'eration, 33400 Talence, France.}
\email{fpazuki@math.ku.dk}
\address{Pascal Molin, Institut de Math\'ematiques de Jussieu - Paris rive gauche
UMR7586, 75013 Paris, France.}
\email{molin@math.univ-paris-diderot.fr}

\thanks{Les auteurs remercient l'IRN GANDA (CNRS) pour le soutien, ainsi que l'Universit\'e d'Antananarivo pour l'hospitalit\'e. Ils remercient aussi Jean-Fran\c{c}ois Mestre, Farbod Shokrieh et Laurent Berger pour leurs pr\'ecieuses remarques. Merci \`a l'arbitre pour ses remarques utiles. FP et PM sont soutenus par le projet ANR-17-CE40-0012 Flair. FP est soutenu par le projet ANR-20-CE40-0003 Jinvariant.}
\maketitle

\begin{abstract}
Nous faisons un tour d'horizon de r\'esultats sur les groupes de Galois de troncatures de s\'eries enti\`eres, avec la s\'erie exponentielle comme figure de proue. On propose ensuite de nouvelles explorations avec des calculs explicites de groupes de Galois d'approximants de Pad\'e qui semblent jouir, eux aussi, de propri\'et\'es int\'eressantes.
\end{abstract}

{\flushleft
\textbf{Mots-Clefs:} Groupes de Galois. S\'eries enti\`eres. Approximants de Pad\'e.\\
}

\begin{center}
---------
\end{center}

\begin{center}
\textbf{Truncated exponential and other tales of Galois groups}.
\end{center}

\textsc{Abstract.} We give a survey of results on the Galois group of polynomials obtained by truncation of power series, the main example being the exponential series. We also present some evidence of a new phenomena\!: Galois groups of Pad\'e approximation polynomials seem to have special properties as well.

\begin{center}
---------
\end{center}

{\flushleft
\textbf{Keywords\!\!:} Galois groups. Power series. Pad\'e approximation.\\
\textbf{Mathematics Subject Classification\!\!:} 11R32, 12F12. }

\thispagestyle{empty}

\section{Introduction}

Le calcul explicite de groupes de Galois de polyn\^omes \`a coefficients rationnels est une entreprise passionnante et bien souvent difficile. La r\'esolution du probl\`eme de Galois inverse pour le groupe des permutations $\mathcal{S}_n$ et son groupe altern\'e $\mathcal{A}_n$ est classique et date de Hilbert, on pourra consulter \cite{JLY} pour un expos\'e plus moderne. Une id\'ee diff\'erente circulait d\'ej\`a en 1929 : prendre une s\'erie enti\`ere \`a coefficients rationnels, la tronquer \`a l'ordre $N$ et chercher des informations galoisiennes sur le polyn\^ome obtenu. Est-il irr\'eductible ? Son groupe de Galois est-il particulier ? Peut-on le calculer ?

Schur a ouvert la voie avec les articles \cite{Sch1, Sch2} qui traitent de la s\'erie exponentielle $$\mathrm{e}^x=\sum_{k=0}^{+\infty}\frac{x^k}{k!}.$$ \noindent Pour tout $n\in{\mathbb{N}}$, notons l'exponentielle tronqu\'ee \`a l'ordre $n$ par
\begin{equation}\label{expoN}
T_n(x)=\sum_{k=0}^{n}\frac{x^k}{k!}.
\end{equation}

\noindent C'est un polyn\^ome en $x$ de degr\'e $n$, \`a coefficients rationnels. Est-il irr\'eductible ? Quel est le groupe de Galois de $T_n$ ? Des calculs explicites men\'es en PARI/gp pour des petites valeurs de $n$ font appara\^itre une r\'egularit\'e qui aiguisera la curiosit\'e du lecteur. On notera $\mathcal{S}_n$ le groupe sym\'etrique d'indice $n$ et $\mathcal{A}_n$ son sous-groupe altern\'e form\'e des permutations paires. Le th\'eor\`eme de Schur est le suivant.

\begin{thm}\label{premier}
L'exponentielle tronqu\'ee v\'erifie les propri\'et\'es suivantes.
\begin{enumerate}
\item[(a)] Soit $n$ un nombre entier non divisible par $4$. Le groupe de Galois du polyn\^ome $T_n$ est $\mathcal{S}_n$. 
\item[(b)] Soit $k\geq 1$ un entier naturel. Le groupe de Galois du polyn\^ome $T_{4k}$ est $\mathcal{A}_{4k}$.
\end{enumerate}
\end{thm}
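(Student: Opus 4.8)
The plan is to replace $P_N$ by the monic integer polynomial
\[
h_N(x)=N!\,P_N(x)=\sum_{n=0}^{N}\frac{N!}{n!}\,x^{n},
\]
which has the same splitting field, hence the same Galois group $G_N\subseteq S_N$, and to read off everything from $p$-adic Newton polygons. The basic fact is that $v_p(N!/n!)$ is the number of multiples of $p$ in $(n,N]$. For a prime $p$ with $N/2<p\le N$ this equals $1$ for $0\le n\le p-1$ and $0$ for $p\le n\le N$, so the Newton polygon of $h_N$ at $p$ is a segment of slope $-1/p$ over $[0,p]$ followed by a horizontal segment over $[p,N]$. The first segment being primitive, $h_N$ has over $\mathbb{Q}_p$ an irreducible factor $\phi_p$ of degree exactly $p$, totally ramified; writing $N=p+r$ and using $\frac{N!}{(p+s)!}\equiv\frac{r!}{s!}\pmod p$ for $0\le s\le r$ one finds $h_N\equiv \overline{r!}\,x^{p}\,\overline{P_r}(x)\pmod p$ with $P_r$ again a truncated exponential, and since $\overline{P_r}{}'=\overline{P_{r-1}}$ with $\overline{P_{r-1}}(0)=1$ the polynomial $\overline{P_r}$ is separable over $\mathbb{F}_p$; hence the complementary factor $\psi_p$, of degree $r=N-p$, is unramified at $p$.

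The first real step is the irreducibility of $P_N$. If $h_N=uv$ in $\mathbb{Z}[x]$ with $u,v$ nonconstant then, $\phi_p$ being irreducible over $\mathbb{Q}_p$, one factor has degree $\ge p>N/2$; pushing this over all primes in $(N/2,N]$ and, for primes $p$ in the ranges $(\tfrac{N}{k+1},\tfrac{N}{k}]$, over the analogous Newton polygon data, one reduces a hypothetical small factor to the statement that $h_N(0)=N!$ has a divisor none of whose prime factors exceeds its degree — which is ruled out by the Sylvester–Schur theorem, which Schur established in this very context. So $G_N$ is transitive of degree $N$. Now fix a prime $p$ with $N/2<p\le N$, embed the splitting field of $h_N$ in $\overline{\mathbb{Q}_p}$ and take the inertia group $I_p\subseteq G_N$ at a place above $p$. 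As $\psi_p$ is unramified, $I_p$ fixes each of its $N-p$ roots; as $\phi_p$ is totally ramified of degree $p$ it stays irreducible over the maximal unramified extension, so $I_p$ acts transitively on its $p$ roots and its image in $S_p$, being transitive of prime degree, contains a $p$-cycle. Hence $G_N$ contains a $p$-cycle fixing $N-p$ points.

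It remains to prove $G_N\supseteq\mathcal{A}_N$. A transitive subgroup of $S_N$ containing a $p$-cycle with $p>N/2$ is primitive (a nontrivial block meeting the support of the cycle would be $\langle c\rangle$-invariant, hence of size $\ge p>N/2$). For $N$ large there is a prime $p$ with $N/2<p\le N-3$, and Jordan's theorem — a primitive subgroup of $S_N$ containing a $p$-cycle, $p$ prime, $p\le N-3$, contains $\mathcal{A}_N$ — concludes; the finitely many remaining $N$ are handled directly (for $N$ prime via Burnside's theorem on transitive groups of prime degree together with the list of small $2$-transitive groups). Finally one separates $\mathcal{A}_N$ from $S_N$ by the discriminant: from $P_N'=P_{N-1}$ and, for $P_{N-1}(\beta)=0$, $P_N(\beta)=\beta^{N}/N!$, a resultant computation gives
\[
\disc(P_N)=(-1)^{N(N-1)/2}\,(N!)^{\,2-N},
\]
which lies in $(\mathbb{Q}^{\times})^{2}$ precisely when $N\equiv 0\pmod 4$. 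Hence $G_N\subseteq\mathcal{A}_N$ iff $4\mid N$, and combined with $G_N\supseteq\mathcal{A}_N$ this gives $G_N=\mathcal{A}_N$ for $4\mid N$ and $G_N=S_N$ otherwise.

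The discriminant computation is routine; the real difficulties are the irreducibility, where Bertrand's postulate alone does not suffice and one needs the Sylvester–Schur theorem, and producing a $p$-cycle fixing at least three points — which requires simultaneously controlling a prime in $(N/2,N-3]$ and the (unramified) behaviour of the complementary factor $\psi_p$ — together with the ad hoc treatment of the small values of $N$.
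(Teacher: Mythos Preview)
The paper does not actually prove Th\'eor\`eme~\ref{premier}: it is stated as Schur's classical result, with pointers to \cite{Sch1,Sch2,Sch3} and to Coleman's Newton--polygon argument \cite{Col}, and the paper then deliberately proves only the much weaker Proposition~\ref{second} (the cases $N$ prime and $N=4k$ with $4k-1$ prime) by elementary means --- Eisenstein, a direct coefficient comparison, and the discriminant formula of Lemme~\ref{discri}. So there is no ``paper's own proof'' to match; what you have written is essentially Coleman's proof, which the paper only surveys.

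As a sketch of that proof your outline is sound. The Newton polygon at a prime $p\in(N/2,N]$, the congruence $h_N\equiv x^{p}h_{r}\pmod p$ with $h_r=r!\,P_r$ separable (via $P_r-P_r'=x^{r}/r!$ and $P_{r-1}(0)=1$), the resulting totally ramified/unramified factorisation $\phi_p\psi_p$, and the inertial $p$-cycle are all correct. Your discriminant formula $\disc(P_N)=(-1)^{N(N-1)/2}(N!)^{2-N}$ is the non-monic version of the paper's Lemme~\ref{discri} (which works with the monic $Q_N=N!P_N$); the square-class is the same and the conclusion $\disc\in(\mathbb{Q}^{\times})^{2}\iff 4\mid N$ is identical to Lemme~\ref{Gal4k}.

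Two places are thin. First, the irreducibility paragraph is really only a pointer: turning ``a hypothetical small factor contradicts Sylvester--Schur'' into a proof requires, for \emph{every} prime $p\le N$, the slope bound (all Newton slopes of $h_N$ at $p$ lie in $[-1/p,0]$) that forces $v_p(u(0))=0$ whenever $\deg u<p$; the paper's Lemme~\ref{4k-1} is exactly this computation done by hand in the special case $p=N-1$. Second, Jordan's theorem needs a prime in $(N/2,\,N-3]$, which is empty for $N\le 7$; ``handled directly'' is true but your parenthetical about Burnside only addresses $N$ prime, leaving $N=4$ and $N=6$ to be checked separately. None of this is wrong, but a complete write-up must fill these in.
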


C'est bien entendu la raison principale qui pousse \`a formuler la question classique suivante : 

\begin{ques}\label{ques tronc}
Quelles s\'eries enti\`eres ont des troncatures qui jouissent de propri\'et\'es galoisiennes similaires \`a celles propos\'ees dans le th\'eor\`eme \ref{premier}?
\end{ques}

Un coup d'oeil \`a la section \ref{horizon} permet de comprendre que les familles de polyn\^omes orthogonaux sont jusqu'ici des acteurs importants dans cette pi\`ece, mais on verra aussi dans les sections \ref{Pade} et \ref{sec:real_cyclo} que les polyn\^omes impliqu\'es dans la construction des approximations de Pad\'e, en lieu et place des simples troncatures, pourrait s'av\'erer devenir une source d'exemples d'une nouvelle nature.

Ce premier texte de Schur a \'et\'e suivi rapidement par \cite{Sch3} qui traite de polyn\^omes de Laguerre $L_n$ d\'efinis pour tout entier $n\geq0$ par $$L_n(x)=\sum_{k=0}^{n}\binom{n}{k}\frac{(-x)^k}{k!}$$ (notons que \cite{Sch3} traite aussi de la s\'erie exponentielle), puis par \cite{Sch4} qui porte encore sur les polyn\^omes de Laguerre, et aborde de plus les polyn\^omes de Hermite $H_n$, d\'efinis pour tout entier $n\geq0$ par $$H_n(x)=\sum_{k=0}^{\lfloor n/2 \rfloor}(-1)^k \binom{n}{2k} 1\cdot3\cdot5\cdot\ldots\cdot(2k-1) x^{n-2k}.$$

Notre objectif ici est double. Nous pr\'esentons tout d'abord un tour d'horizon de travaux plus r\'ecents sur le m\^eme th\`eme dans la section \ref{horizon}. Nous pr\'esentons ensuite en section \ref{Pade} une \'etude bas\'ee sur des calculs men\'es en PARI/gp et qui indique que d'autres fonctions naturelles, dont certains approximants de Pad\'e, jouissent de propri\'et\'es galoisiennes int\'eressantes, notamment les s\'eries enti\`eres associ\'ees aux fonctions $x\mapsto\frac12\log\left(\frac{1+x}{1-x}\right)$ et $x\mapsto\sin(x) + \sinh(x)$. Ces r\'esultats num\'eriques poussent les auteurs \`a formuler la question suivante :

\begin{ques}\label{ques pad}
Quelles fonctions ont des approximants de Pad\'e qui jouissent de propri\'et\'es galoisiennes particuli\`eres ?
\end{ques}

Nous concluons cette introduction avec les deux r\'esultats suivants, en r\'eponse partielle \`a la question \ref{ques pad}. Le premier, le th\'eor\`eme \ref{expo pad}, concerne les propri\'et\'es galoisiennes des approximants de Pad\'e de la fonction exponentielle. Le second, le th\'eor\`eme \ref{thm:real_cyclo}, est nouveau et concerne la s\'erie enti\`ere associ\'ee \`a la fonction $x\mapsto(1+4x)^{-1/2}$ au voisinage de $0$.

 Nos premiers r\'esultats num\'eriques, rendus publics dans une premi\`ere version de ce texte en 2020, portaient \`a croire que les groupes de Galois des num\'erateurs et d\'enominateurs des fractions rationnelles des approximants de Pad\'e de la fonction exponentielle avaient pour groupes de Galois $\mathcal{S}_n$ ou $\mathcal{A}_n$. Cette observation a depuis \'et\'e confirm\'ee partiellement par \cite{CuSh}, voici leur th\'eor\`eme (les d\'efinitions n\'ecessaires sont rappel\'ees en section \ref{Pade}).

\begin{thm}\label{expo pad}
Soient $P(m,k,x)$ et $Q(m,k,x)$ les approximants de Pad\'e d'ordre $(m,k)$ de la s\'erie exponentielle.
\begin{enumerate}
\item[(a)] Pour tout $m\geq1$, les polyn\^omes $P(m,m,x)$ et $Q(m,m,x)$ sont irr\'eductibles et ont pour groupe de Galois $\mathcal{S}_m$.
\item[(b)] Supposons que $P(m,m+1,x)$ et $Q(m,m+1,x)$ soient irr\'eductibles. Alors le groupe de Galois de $P(m,m+1,x)$ est $\mathcal{A}_m$ si et seulement si \Big($m=0\,(\mathrm{mod}\, 4)$ ou $m=2(2k+1)^2-1$ pour un entier $k\geq0$\Big). Le groupe de Galois de $Q(m,m+1,x)$ est $\mathcal{A}_{m+1}$ si et seulement si $m=(2k+1)^2-1$ pour un entier $k\geq0$.
\item[(c)] Soit $p\geq3$ un nombre premier et soit $n\geq1$ un entier.  Les polyn\^omes $P(p^n, p^n+1,x)$, $Q(p^n, p^n+1,x)$, $P(p^n, p^n-1,x)$, $Q(p^n-1, p^n,x)$ sont irr\'eductibles sur $\mathbb{Q}$.
\end{enumerate}
\end{thm}

Le dernier r\'esultat que nous mettrons \`a l'honneur est nouveau et concerne la s\'erie enti\`ere associ\'ee \`a $x\mapsto (1+4x)^{-1/2}$. Dans ce cas, et contrairement au cas de la s\'erie exponentielle, les troncatures de la s\'erie ne se comportent pas du tout (dans tous les cas test\'es) comme ses approximants de Pad\'e. Plus pr\'ecis\'ement nous montrons l'\'enonc\'e suivant. Notons
\[
    \frac{P_n(x)}{Q_n(x)} = \frac{1}{\sqrt{1+4x}} + O(x^n), \deg(Q_n)\leq \frac n2,
\]
\noindent son approximant de Pad\'e d'ordre $n$.

\begin{thm}\label{thm:real_cyclo}
Les polyn\^omes $P_n$ v\'erifient les propri\'et\'es suivantes, pour $n\geq1$.
  \begin{enumerate}
      \item[(a)] $P_n(x)$ a pour racines les $\floor{\tfrac{n-1}2}$ valeurs de $x$ telles que
    \[4x+1=\left(\frac{\zeta-1}{\zeta+1}\right)^2,\text{ pour } \zeta^n=1.\]
  \item[(b)] $P_n(x)$ d\'efinit l'extension cyclotomique r\'eelle
     $\Q(\cos(\frac{2\pi}n))$.
  \item[(c)] Le groupe de Galois de $P_n$ est $(\Z/n\Z)^\times/\{\pm1\}$.
  \end{enumerate}
\end{thm}

Il est int\'eressant d'ajouter que les premi\`eres troncatures de la s\'erie enti\`ere associ\'ee \`a la fonction $x\mapsto(1+4x)^{-1/2}$ ont toutes des groupes de Galois non-ab\'eliens (voir section \ref{sec:real_cyclo}). Le texte se termine avec la section \ref{sec:real_cyclo}, qui d\'etaille la preuve du th\'eor\`eme \ref{thm:real_cyclo}. On verra que cette preuve est essentiellement bas\'ee sur une relation de r\'ecurrence fonctionnelle satisfaite par les approximants de Pad\'e consid\'er\'es.

\section{Tour d'horizon de r\'esultats connexes}\label{horizon}

Les trois paragraphes suivants pr\'esentent trois directions de recherche. La premi\`ere direction est la m\'ethode des polygones de Newton, de loin la m\'ethode la plus utilis\'ee pour obtenir des r\'esultats. La seconde direction est illustr\'ee par un th\'eor\`eme de Chambert-Loir de type Jentzsch-Szeg\"o. La troisi\`eme direction est une cons\'equence de la finitude des points rationnels sur les courbes de genre $g\geq 2$ (conjecture de Mordell, th\'eor\`eme de Faltings). 

\subsection{La m\'ethode des polygones de Newton}

Coleman (1987, \cite{Col}) donne une nouvelle preuve du th\'eor\`eme de Schur sur les troncatures $T_n$ de la fonction exponentielle (la formule explicite est rappel\'ee en (\ref{expoN})) en utilisant les polygones de Newton. Il donne en exercice page 188 les cas des polyn\^omes de Hermite et Laguerre. (On pourra aussi consulter \cite{Con}.) Rappelons ce qu'est le polygone de Newton d'un polyn\^ome $P\in{\mathbb{Q}_p(X)}$ de degr\'e $n$, o\`u $p$ est un nombre premier et $v_p$ la valuation $p$-adique. Quitte \`a diviser $P$ par une puissance de $X$, puis par $P(0)$, on peut supposer que $P(0) = 1$, de sorte que $P(X)$ s'écrive $$P(X)=1+a_1X+\cdots+a_nX^n,$$ o\`u les coefficients $a_i$ sont dans $\mathbb{Q}_p$ pour tout $i\in{1,\cdots, n}$ et $a_n\neq0$. Consid\'erons l'ensemble $S$ des points du plan d\'efinis par $S=\{  A_0=(0,0)\}\cup\{A_i=(i, v_p(a_i)), \mathrm{tels}\;\mathrm{que}\; i\in\{1,\cdots,n\}\;\mathrm{et}\; a_i\neq0 \}$. Le polygone de Newton de $P$ est alors la fronti\`ere inf\'erieure de l'enveloppe convexe de cet ensemble $S$. Il s'agit donc d'une ligne bris\'ee, r\'eunion de segments dont les extr\'emit\'es sont dans $S$.

On peut alors retrouver des informations sur la factorisation d'un polyn\^ome dans $\mathbb{Q}_p[X]$ par le calcul de son polygone de Newton. Le polygone de Newton du polyn\^ome $T_n$ (dont la formule explicite est rappel\'ee en (\ref{expoN})) pour un choix de nombre premier $p$ satisfaisant $v_p(n!)=1$ est de la forme suivante, si on note ses coefficients $(a_i)_{0\leq i\leq n}$.

\begin{pspicture}(-4,-4)(5,4)

\rput(-3.7,2.5){$v_p(a_i)$}
\rput(7,-0.4){$i$}

\psline{->}(-4, 0)(7,0)
\psline{->}(-3,-2)(-3,3)

\psdot(-3,0)
\rput(-3.4,-0.3){$(0,0)$}

\psdot(-2,0)
\rput(-2.4,+0.3){$(1,0)$}

\psdot(-1,0)
\rput(-1.4,+0.3){$(2,0)$}

\psdot(1,0)
\rput(0.8,-0.3){$(p-1,0)$}

\psdot(2,-1)
\rput(1.6,-1.3){$(p,-1)$}

\psdot(3,-1)
\rput(3,-0.7){$(p+1,-1)$}

\psdot(6,-1)
\rput(5.9,-1.3){$(n,-1)$}

\psline{-}(-3,0)(2,-1)
\psline{-}(2,-1)(6,-1)

\rput(8,1.6){Polygone de $T_n$}

\end{pspicture}

Par le th\'eor\`eme 3.1 page 100 de \cite{Cass}, on d\'eduit que $T_n=R_1 R_2$ dans $\mathbb{Q}_p[x]$, avec $\deg R_1=p$ et $\deg R_2 =n-p$. On dit que le polyn\^ome $R_1$ est pur, de pente $-1/p$. Les travaux mentionn\'es \`a pr\'esent utilisent cette m\'ethode de mani\`ere centrale. 
\\

Filaseta et Trifonov (2002, \cite{FiTr}) d\'emontrent l'irr\'eductibilit\'e des polyn\^omes de Bessel sur $\mathbb{Q}$, d\'efinis pour tout entier $n\geq0$ par $$y_n(x)=\sum_{k=0}^{n}\frac{(n+k)!}{2^j(n-k)! k!}x^k,$$ achevant ainsi la d\'emonstration d'une conjecture de Grosswald pr\'edisant cette propri\'et\'e. La m\'ethode employ\'ee est bas\'ee sur les polygones de Newton, et le c\oe ur de la preuve est une qu\^ete de nombres premiers v\'erifiant des conditions particuli\`eres.
\\

Filaseta et Lam (2002, \cite{FiLa}) \'etudient l'irr\'eductibilit\'e de polyn\^omes de Laguerre g\'en\'eralis\'es d\'efinis pour tout entier $n\geq0$ et tout nombre rationnel $\alpha$ par $$L_n^{(\alpha)}(x)=\sum_{k=0}^{n}\binom{n+\alpha}{n-k}\frac{(-x)^k}{k!},$$ o\`u on note $\binom{t}{k}=t(t-1)\cdots(t-k+1)/k!$ pour tout $t$ rationnel et tout $k$ entier positif. Lorsque le param\`etre $\alpha\in\mathbb{Q}$ est fix\'e et n'est pas un entier n\'egatif, ils montrent l'irr\'eductibilit\'e de ces polyn\^omes sur $\mathbb{Q}$, sauf pour un nombre fini de valeurs de $n$ (d\'ependant de $\alpha$). Ils utilisent une m\'ethode proche de celle de Schur, avec en plus un argument bas\'e sur une \'equation de Thue et un argument bas\'e sur des progressions arithm\'etiques de nombres premiers.
\\

Hajir (2009, \cite{Haj}) calcule le groupe de Galois de certains polyn\^omes de Laguerre g\'en\'eralis\'es, notamment quand le param\`etre $\alpha$ est un entier n\'egatif (le cas laiss\'e de c\^ot\'e par Filaseta et Lam). Il utilise aussi des polygones de Newton, ainsi que des crit\`eres d'irr\'eductibilit\'e de Coleman et de Filaseta.
\\

Akhtari et Saradha (2011, \cite{AkSa}) donnent une borne explicite $m_0$ \`a partir de laquelle les polyn\^omes de Hermite et les polyn\^omes de Laguerre (ainsi que certaines g\'en\'eralisations) de degr\'e $m\geq m_0$ sont irr\'eductibles ou presque irr\'eductibles (un polyn\^ome presque irr\'eductible \'etant simplement un polyn\^ome de degr\'e $m$ produit d'un facteur lin\'eaire et d'un polyn\^ome de degr\'e $m-1$). La m\'ethode employ\'ee est naturellement bas\'ee sur les polygones de Newton, le th\'eor\`eme des progressions arithm\'etiques de Dirichlet, et la finitude du nombre de solutions enti\`eres des \'equations de Thue.
\\

Cullinan et Hajir (2014, \cite{CuHa}) \'etudient les polyn\^omes de Legendre, d\'efinis pour tout entier $n\geq0$ par $$\mathrm{Leg}_n(x)=\sum_{k=0}^{n}\binom{n}{n-k}\binom{n}{k}\left(\frac{x-1}{2}\right)^k\left(\frac{x+1}{2}\right)^{n-k}.$$ Ils conjecturent notamment que le groupe de Galois de $\mathrm{Leg}_{2n}$ est isomorphe au produit en couronne $\mathcal{S}_2 \wr \mathcal{S}_n$ et obtiennent des r\'esultats partiels dans cette direction. La m\'ethode employ\'ee repose sur le crit\`ere de Jordan : des informations sur la taille du groupe de Galois d'un polyn\^ome $P$ peuvent \^etre obtenues en observant leur polygone de Newton associ\'e \`a un nombre premier peu ramifi\'e dans le corps de d\'ecomposition de $P$. En utilisant des congruences dites de Holt-Schur (voir \cite{CuHa} pour plus de d\'etails), ils obtiennent aussi des r\'esultats dans le cas de ramification sauvage.
\\

Shokri, Shaffaf et Taleb (2019, \cite{SST}) \'etudient les troncatures \`a l'ordre $n$ des s\'eries enti\`eres $1+\log(1-x)$, $1+\sin(x)$ et $\cos(x)$, par des m\'ethodes proches de celles de Coleman \cite{Col}, et obtiennent des conditions suffisantes sur $n$ pour d\'emontrer que le groupe de Galois de ces troncatures est aussi gros que possible.
\\

\subsection{Un th\'eor\`eme de Chambert-Loir \`a la Jentzsch-Szeg\"o}

Chambert-Loir (2011, \cite{Cha}) d\'emontre un th\'eor\`eme de type Jentzsch-Szeg\"o pour les s\'eries enti\`eres \`a coefficients dans une extension finite de $\mathbb{Q}_p$ : le degr\'e du facteur irr\'eductible unitaire de plus grand degr\'e pour une troncature de s\'erie enti\`ere (dont les coefficients satisfont une condition naturelle tr\`es g\'en\'erale) tend vers l'infini. Plus pr\'ecis\'ement, soit $p$ un nombre premier, soit $K$ une extension finie de $\mathbb{Q}_p$, notons $K[[X]]$ l'anneau des s\'eries formelles en $X$ \`a coefficients dans $K$, et pour tout r\'eel $R>0$, notons $K\{R^{-1}X\}$ l'ensemble des s\'eries $\sum_{j\geq0} a_j X^j$ de $K[[X]]$ telles que $a_j R^{j}\to 0$ lorsque $j\to +\infty$. Le th\'eor\`eme de Chambert-Loir est le suivant.

\begin{thm}
Soit $f=\sum_{j\geq0}a_j X^j\in{K\{R^{-1} X\}}$ et pour tout entier naturel $n$, notons $f_n(X)=\sum_{j=0}^{n}a_j X^j$ sa troncature \`a l'ordre $n$. Pour tout entier $d>0$, pour toute sous-suite $(n_k)_{k\geq0}$ telle que $a_{n_k}^{1/{n_k}}\to 1/R$ lorsque $k\to+\infty$, le nombre de facteurs irr\'eductibles unitaires de $f_{n_k}$ de degr\'e inf\'erieur ou \'egal \`a $d$ est $o(n_k)$. En particulier le degr\'e du plus grand facteur irr\'eductible unitaire de $f_{n_k}$ tend vers l'infini lorsque $k$ tend vers l'infini. 
\end{thm}

Ce th\'eor\`eme indique donc que l'existence d'autres exemples de s\'eries enti\`eres dont les troncatures sont des polyn\^omes irr\'eductibles est probable.

\subsection{Une cons\'equence de Mordell-Faltings}

Cullinan, Hajir et Sell (2009, \cite{CHS}) obtiennent un r\'esultat sur une sous-famille de polyn\^omes de Jacobi. Les polyn\^omes de Jacobi sont d\'efinis pour tout $n\geq0$ et tout $(\alpha,\beta)\in{\mathbb{C}^2}$ par $$P_n^{(\alpha,\beta)}(x)=\sum_{k=0}^{n}\binom{n+\alpha}{n-k}\binom{n+\beta}{k}\left(\frac{x-1}{2}\right)^k\left(\frac{x+1}{2}\right)^{n-k},$$ o\`u on note $\binom{t}{k}=t(t-1)\cdots(t-k+1)/k!$ pour tout $t\in{\mathbb{C}}$ et tout $k$ entier positif. Ce sont des polyn\^omes orthogonaux obtenus \`a partir de la s\'erie hyperg\'eom\'etrique $_2F_1$. On remarque que les polyn\^omes de Legendre sont un cas particulier des polyn\^omes de Jacobi: $\mathrm{Leg}_n(x)=P_n^{(0,0)}(x)$. Cullinan, Hajir et Sell s'int\'eressent plus pr\'ecis\'ement \`a la famille de polyn\^omes $$J_n(x,y)=(-1)^n P_n^{(-1-n, y+1)}(1-2x)=\sum_{j=0}^n\binom{y+j}{j}x^j,$$ (voir \cite{CHS} page 97 et \cite{CuHa} page 536 pour les calculs formels sur les expressions de ces polyn\^omes) en utilisant des propri\'et\'es de la courbe plane d\'efinie par $J_n(x,y)=0$. Ils montrent le r\'esultat suivant.

\begin{thm}\label{Mordell_appli}
Soit $n\geq6$ un entier naturel. Le polyn\^ome $J_n(x,y_0)$ est irr\'eductible sur $\mathbb{Q}$ pour tout $y_0\in{\mathbb{Q}}$, sauf \'eventuellement pour un nombre fini d'exceptions. De plus, si $n$ est impair, le groupe de Galois de $J_n(x,y_0)$ est $\mathcal{S}_n$ pour tout $y_0\in{\mathbb{Q}}$, sauf \'eventuellement pour un nombre fini d'exceptions. Si $n$ est pair, il existe un ensemble mince de $y_0\in{\mathbb{Q}}$ pour lesquels le groupe de Galois de $J_n(x,y_0)$ est $\mathcal{A}_n$.
\end{thm}

La m\'ethode employ\'ee suit celle de Hajir et Wong \cite{HaWo06} et repose sur la Proposition 5.17 de \cite{Mul}, que nous rappelons ici.

\begin{prop}\label{mumu}
Soit $k$ une extension finie de $\mathbb{Q}$. Soit $f(x,y)\in{k(y)[x]}$ un polyn\^ome irr\'eductible. Supposons que $f(x,y_0)$ n'est pas irr\'eductible pour une infinit\'e de valeurs $y_0\in{k}$. Alors le corps de d\'ecomposition $L$ de $f(x,y)$ sur $k(y)$ contient un corps $E$ contenant $k(y)$ tel que $f(x,y)$ n'est pas irr\'eductible sur $E$, de plus le corps $E$ est ou bien rationnel, ou bien le corps de fonctions d'une courbe elliptique avec rang de Mordell-Weil non nul.
\end{prop}

La strat\'egie de preuve du th\'eor\`eme \ref{Mordell_appli} est donc en fait bas\'ee sur le th\'eor\`eme de Faltings (conjecture de Mordell) : une courbe d\'efinie sur $\mathbb{Q}$ de genre $g\geq 2$ n'a qu'un nombre fini de points rationnels. On montre que le genre de la courbe (d\'esingularis\'ee) d\'efinie par l'\'equation $J_n(x,y)=0$ est sup\'erieur ou \'egal \`a $2$ d\`es que $n\geq6$, donc cette courbe n'a qu'un nombre fini de points rationnels, on applique alors la contrapos\'ee de la Proposition \ref{mumu}.
\\

Cullinan (2019, \cite{Cu}) \'etudie les polyn\^omes de Laguerre g\'en\'eralis\'es $L_n^{(\alpha)}(x)$, o\`u $n\geq4$ est un entier naturel et $\alpha$ est un nombre rationnel, en regardant l\`a aussi les courbes alg\'ebriques qu'ils d\'efinissent sur $\mathbb{Q}$. Il conjecture que la jacobienne d'une telle courbe n'a que tr\`es peu de points de torsion, est de rang strictement positif sur $\mathbb{Q}$, n'a pas de multiplication complexe et que ses repr\'esentations galoisiennes $\rho_\ell$ sont surjectives pour tout nombre premier $\ell\geq3$.
\\

\section{Groupes de Galois d'approximants de Pad\'e}\label{Pade}

En plus d'examiner des troncatures de certains polyn\^omes orthogonaux, nous pr\'esentons \`a pr\'esent les premiers r\'esultats, exp\'erimentaux et th\'eoriques, concernant l'arithm\'etique des approximants de Pad\'e. 

\subsection{D\'efinition}

Soient $m\geq 0 $ et $k\geq 1$ deux entiers. Soit $f:\mathbb{R}\to\mathbb{R}$ une fonction admettant un d\'eveloppement limit\'e en $0$ \`a l'ordre $m+k+1$ \`a coefficients rationnels et telle que $f(0)\neq0$. L'approximant de Pad\'e (\cite{pade}) d'ordre $(m,k)$ est la fraction rationnelle:
 \begin{equation}
 R(x)=\frac {P(x)}{Q(x)},
 \end{equation}
v\'erifiant $\deg P\leq m$, $\deg Q\leq k$ et
\begin{equation}
 f(x)=\frac{P(x)}{Q(x)}+ O(x^{m+k+1}),
\end{equation}
o\`u la notation $O(.)$ est li\'ee \`a l'approximation au voisinage de $0$. Le quotient $R$ est unique, $P\in{\mathbb{Z}[x]}$ et $Q\in{\mathbb{Z}[x]}$ le sont si on impose \`a la fraction d'\^etre r\'eduite. Dans ce qui suit, les approximations de Pad\'e que nous consid\`ererons seront dites d'ordre $n$ : l'approximation de Pad\'e d'une fonction $f$ sera le couple de polyn\^omes $(P_n,Q_n)$ \`a coefficients entiers avec $\deg Q_n \leq  \lfloor \frac{n}{2} \rfloor $ et $\deg P_n + \deg Q_n <n$ et tel que
$$ f(x)=\frac{P_n(x)}{Q_n(x)} + O(x^{n}).$$

\subsection{R\'esultats num\'eriques}

Les calculs sont men\'es avec la fonction {\tt bestapprPade()} impl\'ement\'ee dans  \cite{pari}. On pr\'esente ici des calculs obtenus en utilisant le script suivant : 
\\

\begin{center}
\begin{verbatim}
t(n) = bestapprPade(f(x+O(x^n)), n\2)
gn(n)=my(F=factor(numerator(t(n)))[,1]);F[#F];
[ polgalois(gn(n)) | n <- [n_1...n_2]]
\end{verbatim}
\end{center}

\vspace{0.3cm}

\subsubsection{Exponentielle} Commen\c{c}ons par consid\'erer les approximants de Pad\'e de la fonction exponentielle. Les premiers polyn\^omes obtenus sont irr\'eductibles. Par exemple, les cas $n=10$ et $n=13$ donnent
\begin{align*}
 & P_{10} =x^4 + 24x^3 + 252x^2 + 1344x + 3024, \\
 & Q_{10}  = x^5 - 25x^4 + 300x^3 - 2100x^2 + 8400x - 15120,\\
 & P_{13} =x^6 + 42x^5 + 840x^4 + 10080x^3 + 75600x^2 + 332640x + 665280, \\
 & Q_{13}  = x^6 - 42x^5 + 840x^4 - 10080x^3 + 75600x^2 - 332640x + 665280.
\end{align*}
Quelques calculs donnent aussi le tableau suivant, o\`u $G(P)$ d\'esigne le groupe de Galois du polyn\^ome $P$ :

\begin{center}
\begin{tabular}{|ccccccccccc|}
    \toprule
    $n$
    & $10$ & $13$ & $17$ & $18$ &$19$ &  $26$  & $34$ & $40$ & $41$& $42$\\
    \midrule
    $G(P_n)$
    & $\mathcal{A}_{4}$ & $\mathcal{S}_{6}$ & $\mathcal{S}_{8}$ & $\mathcal{A}_{8}$ & $\mathcal{S}_{9}$ & $\mathcal{A}_{12}$& $\mathcal{A}_{16}$ & $\mathcal{S}_{19}$ & $\mathcal{S}_{20}$ & $\mathcal{A}_{20}$  \\

    $G(Q_n)$
    & $\mathcal{S}_{5}$ & $\mathcal{S}_{6}$ & $\mathcal{S}_{8}$ & $\mathcal{A}_{9}$ & $\mathcal{S}_{9}$ & $\mathcal{S}_{13}$ & $\mathcal{S}_{17}$ & $\mathcal{S}_{20}$ & $\mathcal{S}_{20}$& $\mathcal{S}_{21}$  \\
    \bottomrule
 \end{tabular}   
\end{center}

\vspace{0.3cm}

On constate donc l\`a aussi une alternance de groupes sym\'etriques et de groupes altern\'es ! Ces premi\`eres constatations, propos\'ees dans une premi\`ere version de ce texte diffus\'ee sur ArXiv, ont motiv\'e un travail r\'ecent de Cullinan et Sheel \cite{CuSh}. Ils identifient les approximants de Pad\'e d'ordre $(m,k)$ de la fonction exponentielle par les formules $$P(x)=P(m,k,x)=\sum_{j=0}^m\frac{(m+k-j)!}{k!}\binom{m}{j}x^j \quad\quad\mathrm{et}\quad\quad Q(x)=Q(m,k,x)=P(k,m,-x).$$ Ils d\'emontrent alors le th\'eor\`eme \ref{expo pad}, r\'epondant ainsi partiellement \`a la question \ref{ques pad} de l'introduction. La preuve repose en bonne partie sur une remarque de Hajir : les approximants de Pad\'e de la fonction exponentielle sont en fait des cas particuliers de polyn\^omes de Laguerre g\'en\'eralis\'es ! 

\subsubsection{S\'eries logarithmiques} 

Plus amusant encore, en consid\'erant les approximants de Pad\'e d'ordre $n\leq30$ de la s\'erie
$$\frac12\log\left(\frac{1+x}{1-x}\right) = x+\frac{x^3}3+\frac{x^5}5+\dots \; ,$$
seuls des groupes hyperoctah\'edraux (groupes de sym\'etries des hypercubes) apparaissent comme groupes de Galois :
$$ G(P_n)=B_t= C_2 \wr \mathcal{S}_t \text{~et~}  G(Q_n)= B_s=C_2 \wr \mathcal{S}_s  \; ,$$ 
pour certains entiers $s$ et $t$, o\`u on note $\wr$ le produit en couronne, et $C_2$ est le groupe cyclique \`a deux \'el\'ements. Dans la suite des approximants de Pad\'e d'ordre $n\leq30$ de la s\'erie 
$$\log(1-x) = x+\frac{x^2}2+\frac{x^3}3+\frac{x^4}4+\frac{x^5}5+\dots,$$
on remarque qu'il y a apparition en alternance des groupes de type $\mathcal{S}_t$ et de type $B_t$.

\subsubsection{Fonction $x\mapsto\sin(x) + \sinh(x)$} Consid\'erons la fonction $f$ d\'efinie par
\begin{equation}
 f(x)= \sin(x) + \sinh(x).
\end{equation}
Nous sommes dans un cas o\`u les groupes de Galois qui apparaissent lorsque l'on consid\`ere les approximations de Pad\'e d'ordre $n\leq30$ et ceux qui apparaissent lorsque l'on fait la troncature de la s\'erie enti\`ere obtenue \`a partir de $f$ sont du m\^eme type. Ceux-ci sont :
\begin{equation}
 4T_{3}, 8T_{26}, 12T_{185}, 16T_{1758}, \ldots 
\end{equation}

\noindent o\`u pour $n,m \in \mathbb{N}$, la notation $nT_m$ est celle de Butler et McKay \cite{ButlerMcKay} et d\'esigne le m-i\`eme groupe transitif d'ordre $n$. Voir aussi \cite{lmfdb} pour de plus amples informations sur ces groupes. Ceci laisse penser (\`a ce stade, ce n'est qu'une remarque bas\'ee sur des calculs num\'eriques) que les groupes qui apparaissent sont des quotients de groupes de la forme $$C_4 \wr (\mathcal{S}_k\oplus C_2),$$ o\`u $C_4$ est le groupe cyclique d'ordre $4$. On peut donc conclure ce paragraphe avec enthousiasme : il y a de jolies propri\'et\'es \`a d\'ecouvrir sur cette voie !

\subsubsection{Fonction $x\mapsto(1+4x)^{-1/2}$}
L'exemple le plus abouti de notre \'etude concerne la fonction $x\mapsto(1+4x)^{-1/2}$, et fait l'objet de la section \ref{sec:real_cyclo}.

\section{Propri\'et\'es galoisiennes des approximations de la fonction $x\mapsto(1+4x)^{-1/2}$}\label{sec:real_cyclo}

Nous regroupons ici des r\'esultats concernant les approximations de la fonction $x\mapsto(1+4x)^{-1/2}$. Le premier paragraphe est une remarque num\'erique : les troncatures de la s\'erie enti\`ere d\'efinissent des groupes de Galois non-ab\'eliens dans tous les cas calcul\'es. Le second paragraphe pr\'esente l'\'etude des approximants de Pad\'e de cette m\^eme fonction, et nous donnons la preuve du th\'eor\`eme principal (le th\'eor\`eme \ref{thm:real_cyclo}) : les groupes de Galois obtenus sont ab\'eliens !

\subsection{Troncatures} \label{troncatures}
\'Ecrivons le d\'eveloppement limit\'e au voisinage de $0$
$$ \frac{1}{\sqrt{1+4x}}=U_n(x) + O(x^{n+1}). $$
Le groupe de Galois de quelques $U_n(x)$ est donn\'e dans le tableau suivant.
\\

\begin{center}
\begin{tabular}{|ccccccccc|}
    \toprule
    $n$
    & $3$ & $4$ & $5$ & $12$ &$16$ &  $20$  & $21$ & $24$ \\
    \midrule
    $G(U_n)$
    & $S_{3}$ & $\mathcal{A}_{4}$ & $S_{5}$ & $\mathcal{A}_{12}$ & $S_{16}$ & $S_{20}$& $S_{21}$ & $\mathcal{A}_{24}$  \\
    \bottomrule
 \end{tabular}   
\end{center}

Dans tous ces cas explicitement calcul\'es, les groupes de Galois sont toujours soit $\mathcal{S}_n$, soit $\mathcal{A}_n$ pour ces polyn\^omes issus de troncatures de la s\'erie enti\`ere associ\'ee, ils sont donc loin d'\^etre ab\'eliens. Nous n'avons pas encore de preuve de cette propri\'et\'e.

\subsection{Approximants de Pad\'e}

Pour la fonction $x\mapsto(1+4x)^{-1/2}$ au voisinage de $0$, l'approximant de Pad\'e d'ordre $n\geq1$ v\'erifie
$$
 \frac{ P_n(x) }{Q_n(x) } = \frac{1}{\sqrt{1+4x}} + O(x^{n}) .
$$
Un calcul rapide montre, pour $n\leq31$, que les polyn\^omes $P_n$ et $Q_n$ obtenus ne sont pas toujours irr\'eductibles, mais qu'ils semblent toujours d\'efinir des extensions ab\'eliennes ! On peut lister quelques r\'esultats dans le tableau ci-dessous, o\`u $G(P)$ d\'esigne le groupe de Galois du polyn\^ome $P$, et $C_n$ le groupe cyclique d'ordre $n$ :
\\

\begin{center}
\begin{tabular}{|cccccccc|}
    \toprule
    $n$
    & $11$ & $13$ & $17$ & $19$ & $23$ & $29$ & $31$ \\
  \midrule
    $G(P_n)$
    & $C_{5}$ & $C_{6}$ & $C_{8}$ & $C_{9}$ & $C_{11}$& $C_{14}$ & $C_{15}$  \\
    $G(Q_n)$
    & $C_{5}$ & $C_{6}$ & $C_{8}$ & $C_{9}$ & $C_{11}$& $C_{14}$ & $C_{15}$    \\
 \bottomrule
\end{tabular}
\end{center}

\vspace{0.3cm}
Outre le fait que l'on obtient des groupes de Galois ab\'eliens, on remarque que dans tous les cas explicitement calcul\'es,
les $P_n$ et $Q_n$ jouissent de propri\'et\'es de divisibilit\'e semblables
\`a celles des polyn\^omes cyclotomiques :
\begin{equation}\label{eq:divpnqn}
\text{si~} n|m \text{~alors~} P_n | P_m \text{~et si de plus
    $m/n$ est impair alors~} Q_n | Q_m .
\end{equation}

Ce n'est pas un hasard : on va \`a pr\'esent d\'emontrer
ces propri\'et\'es, ainsi que le caract\`ere cyclotomique des polyn\^omes $P_n$, comme annonc\'e dans le th\'eor\`eme \ref{thm:real_cyclo}.

Posons $y=\sqrt{1+4x}$, et consid\'erons les approximants de Pad\'e
\[
    \frac{P_n(x)}{Q_n(x)} = \frac{1}{y} + O(x^n), \deg(Q_n)\leq \lfloor\frac n2\rfloor.
\]
On d\'emontre ici que pour tout $n\geq1$, les num\'erateurs $P_n$ d\'efinissent les corps ab\'eliens r\'eels
$\Q(\cos(\tfrac{2\pi}n))$. Commen\c{c}ons par donner une expression explicite des polyn\^omes $P_n$ et $Q_n$. On fixe $P_0=0$ et $Q_0=2$.
 
\begin{lem}
Les num\'erateurs $P_n(x)$ v\'erifient la relation de
r\'ecurrence
\begin{equation}\label{recu}
  P_{n+1}(x) = P_n(x) + x P_{n-1}(x)
\end{equation}
pour tout entier $n\geq 1$ et sont donn\'es par l'expression suivante :
 \begin{equation}\label{eq:pn-expl}
  P_n(x) = \frac{(\tfrac{1+y}2)^n-(\tfrac{1-y}2)^n}y.
 \end{equation}
 
 Les d\'enominateurs $Q_n(x)$ v\'erifient la relation de
r\'ecurrence
\[
  Q_{n+1}(x) = Q_n(x) + x Q_{n-1}(x)
\]
pour tout entier $n\geq 1$ et sont donn\'es par l'expression suivante :
\begin{equation}\label{eq:qn-expl}
 Q_n(x) = \left(\frac{1+y}2\right)^n+\left(\frac{1-y}2\right)^n.
\end{equation}
 
\end{lem}

\begin{proof}
Listons les premiers termes $P_n$, pour $n\geq0$
\[
0, 1, 1, x + 1, 2x + 1, x^2 + 3x + 1, 3x^2 + 4x + 1, x^3 + 6x^2 + 5x + 1,\dots
\]
Cela laisse entrevoir la r\'ecurrence (\ref{recu}). On tire de (\ref{recu}) une expression explicite de $P_n$ en fonction du discriminant, qui est \'egal \`a $1+4x=y^2$,
et on obtient la forme annonc\'ee en \'ecrivant $P_1 = 1$ et $P_2 = 1$ d'une part, et $Q_1 = 1$ et $Q_2 = 1+2x$ d'autre part. Les expressions pour ces premi\`eres valeurs sont justifi\'ees par $(1+4x)^{-1/2}=1+O(x)=P_1/Q_1$, et de m\^eme
$(1+4x)^{-1/2}=1-2x+O(x^2)$ et $\left(\frac{1+y}2\right)^2+\left(\frac{1-y}2\right)^2=\frac{1}{4}[2+2y^2]=1+2x$, et on a bien $P_2/Q_2=(1+2x)^{-1}=1-2x+O(x^2)$. Ce calcul est de plus compatible avec notre convention pour $P_0$ et $Q_0$.

Il nous suffit donc de v\'erifier que les polyn\^omes ainsi d\'efinis
correspondent bien aux approximants
de Pad\'e : de fait, par une induction imm\'ediate, $P_n(x)\in\Z[x]$
est un polyn\^ome de degr\'e $\leq \floor{\tfrac{n-1}2}$, et
$Q_n(x)\in{\mathbb{Z}[x]}$ est de degr\'e $\leq \floor{\tfrac n2}$.

D'autre part, $1-y=O(x)$, de sorte que
\[
  Q_n(x) = \left(\frac{1+y}2\right)^n+O(x^n) = yP_n(x)+O(x^n).
\]
Ainsi $P_n/Q_n$ est bien l'approximant de Pad\'e d'ordre $n$ de $1/y$.
\end{proof}

Nous allons maintenant voir que la forme explicite donn\'ee en (\ref{eq:pn-expl}) permet de retrouver
les propri\'et\'es \'enonc\'ees dans le th\'eor\`eme \ref{thm:real_cyclo}.

\begin{proof}(du th\'eor\`eme \ref{thm:real_cyclo})

\begin{itemize}
    \item    
        La forme \eqref{eq:pn-expl} (resp. (\ref{eq:qn-expl})) donne les
        divisibilit\'es annonc\'ees en \eqref{eq:divpnqn} car si $n\mid m$, on a $a^n - b^n \mid a^m-b^m$ dans $\mathbb{Q}[a,b]$
        (respectivement si $n\mid m$ et $m/n$ est impair, alors $a^n+b^n\mid a^m+b^m$).
\item
On d\'eduit \'egalement de l'expression (\ref{eq:pn-expl}) les racines de $P_n(x)$
  en fonction de $y=\sqrt{1+4x}$
  \[
    P_n(x) = 0
    \Leftrightarrow \left(\frac{1+y}{1-y}\right)^n = 1,
  \]
 ce qui red\'emontre la relation de divisibilit\'e (\ref{eq:divpnqn}).
\item
  Par ailleurs, soit $\zeta$ une racine $n$-i\`eme de l'unit\'e,
  alors
  \[
    \frac{1+y}{1-y}=\zeta \Leftrightarrow y = \frac{\zeta-1}{\zeta+1}
  \]
  d'o\`u l'expression de $x$.
\item
$x \in\Q(\zeta)\cap\R = \Q(\zeta+\zeta^{-1})$ car
l'expression de $x$ est invariante par $\zeta\mapsto \zeta^{-1}$. Ceci implique $\mathbb{Q}[x]\subset\Q(\zeta+\zeta^{-1})=\Q(\cos(\frac{2\pi}n))$.

On a l'inclusion r\'eciproque :
si $y=\frac{\zeta-1}{\zeta+1}$ avec $\zeta=e^{i\theta}$ pour un $\theta\in{\mathbb{R}}$, alors
$y^2 = -\tan^2(\frac\theta2)$, de sorte que
\[ \zeta+\zeta^{-1} = 2\cos(\theta) = 2\frac{1+y^2}{1-y^2} \in \Q(x). \]
\end{itemize}

Ainsi
le facteur (normalis\'e en fixant le terme constant \'egal \`a 1) de plus haut degr\'e de $P_n(x)$ est
\[
  \Psi_n(x) = \prod_{d\mid n} P_d(x)^{\mu(\frac nd)},
\]
qui est un polyn\^ome irr\'eductible de degr\'e $\frac{\varphi(n)}2$
qui d\'efinit $\Q(\cos(\frac{2\pi}n))$ et
a pour groupe de Galois
\[ G(\Psi_n) = G(P_n) = (\Z/n\Z)^\times/\{\pm1\}. \] C'est la conclusion recherch\'ee.

\end{proof}

\end{document}